\newcommand{\HOX}[1]{\marginpar{\scriptsize #1}}
 \newcommand{\f}{\frac}
\renewcommand{\phi}{\varphi}
\newcommand{\E}{\mathbf E}
\renewcommand{\P}{\mathbf P}
\renewcommand{\Pr}{\P}
\newcommand{\TT}{\mathbb T}
\newcommand{\R}{\mathbb{R}}
\newcommand{\SA}{\texttt{SelfActivate}}
\newcommand{\NV}{\texttt{NoVisit}}
\DeclareMathOperator{\Poi}{Poi}
\DeclareMathOperator{\SFM}{SFM}
\DeclareMathOperator{\FM}{FM}
\DeclareMathOperator{\Comp}{Comp}
\DeclareMathOperator{\len}{len}
\newcommand{\argmax}{\operatorname*{arg\,max}}
\newcommand{\abs}[1]{\lvert #1 \rvert}
\newtheorem{theorem}{Theorem}
\newtheorem{lemma}[theorem]{Lemma}
\newtheorem{claim}[theorem]{Claim}
\newtheorem*{lemma*}{Lemma}
\newtheorem*{conjecture}{Conjecture}
\theoremstyle{definition}
\newtheorem*{remark*}{Remark}
\title{Improved critical drift estimates for the frog model on trees}
\author{Poly Mathews Jr.} \email{Matthew.Junge@baruch.cuny.edu; sit218@lehigh.edu}
\thanks{This project was initiated during the 2023 Polymath Jr.\ Program. The authors are: 
Emma    Bailey,
Yanni	Bills,
Feng	Cheng,
Osanda	Chinthila,
Aryan	Dugar,
Ali	Eser,
Eric	Han,
Chirag Kar,
Matthew	Junge,
Viet	Le,
Jiaqi	Liu,
Zoe	McDonald,
Si	Tang,
Michael	Thomas,
Scott	Wynn, and
Eric	Yu.
The research was partially supported by NSF Grants CAREER-2238272, DMS-2115936, DMS-2218374, and DMS-2137614.  
}
\begin{document}
\maketitle

\begin{abstract}
 Place an active particle at the root of the infinite $d$-ary tree and dormant particles at each non-root site. Active particles move towards the root with probability $p$ and otherwise move to a uniformly sampled child vertex. When an active particle moves to a site containing dormant particles, all the particles at the site become active. The critical drift $p_d$ is the infimum over all $p$ for which infinitely many particles visit the root almost surely. We give improved bounds on $\sup_{d \geq m} p_d$ and prove monotonicity of critical values associated to a self-similar variant.
\end{abstract}




\section{Introduction}

The \emph{frog model with drift} on the infinite rooted $d$-ary tree $\TT_d$ with initial probability measure $\nu$ supported on $[0,\infty)$, denoted $\FM(d,p,\nu)$, is defined as follows. Initially one active particle is at the root $\varnothing\in \mathbb T_d$ and the other vertices have independent and identically $\nu$-distributed (i.i.d.)\ many dormant particles. Active particles perform independent $p$-biased random walk, i.e., moving toward the root with probability $p$ and otherwise moving to a uniformly random child vertex. When an active particle moves to an unvisited site, any dormant particles there become active. These dynamics capture aspects of spatial processes with activation such as combustion, rumor spread, and infection \cite{ramirez2004asymptotic}. Due to the chaotic nature in which the model propagates, researchers referred to particles as awake and sleeping frogs. The zoomorphism has persisted, so we will often use that terminology. 

A \emph{root visit} occurs each time that an awake frog moves to $\varnothing$. Let $V = V_{\FM(d,p,\nu)}$ be the total number of root visits. Call the process \emph{recurrent} if $V = \infty$ almost surely. It is proven in \cite{beckman2019frog} that $V$ satisfies a $0$-$1$ law. Accordingly, we call the process \emph{transient} in the other case that $V < \infty$ almost surely. 

The \emph{critical drift}
\[
p_d(\nu) \coloneqq \inf\{p\colon \FM(d,p,\nu) \text{ is recurrent}\}
\]
is the minimal drift below which the process is transient.
It is known that $p_d(\nu)$ is sensitive to more than just the mean of $\nu$ \cite{johnson2018stochastic, johnson2019sensitivity}. Most interest has been in one-per-site and Poisson-distributed initial configurations. We will write $\FM(d,p,1)$ to denote the one-per-site case $\nu(1)=1$ and $\FM(d,p,\Poi(\mu))$ to denote the case $\nu$ has a Poisson distribution with mean $\mu$. It is useful to standardize the density of sleeping frogs. Unless otherwise indicated we write  \[
p_d \coloneqq \inf\{p\colon \FM(d,p,1) \text{ is recurrent}\}
\]
for the critical drift in the one-per-site frog model.

The frog model was first studied on $\mathbb Z^d$. The frog model on $\mathbb Z^d$ is related to branching random walks on polynomially growing graphs. Telcs and Wormald \cite{telcs1999branching} proved that the one-per-site version is recurrent in all dimensions. Telcs and Wormald \cite{telcs1999branching} and Popov \cite{popov2001frogs} proved that it is recurrent for any $\nu$ with $\nu(0) \neq 1$. Alves, Machado, and Popov as well as Ram\'irez and Sidoravicius showed that the set of visited sites has a limiting shape for the one-per-site model \cite{alves2002shape, ramirez2004asymptotic}.  There has also been interest in the frog model on $\mathbb Z^d$ in which particles have a drift in one coordinate direction \cite{gantert2009recurrence, dobler2018reccurence}. 

On $\mathbb T_d$, a simple random walk corresponds to $p=\tilde p\coloneqq 1/(d+1)$. Determining the transience/recurrence behavior of $\FM(d,\tilde p,1)$ was open for over a decade \cite{popov2003frogs}. The question was partially answered in \cite{hoffman2017recurrence} by Hoffman, Johnson, and Junge who proved that $\FM(d,\tilde p,1)$ is recurrent for $d=2$ and transient for $d\geq 5$. Simulations suggest that the process is recurrent when $d=3$ and transient when $d=4$. Later, the same authors in \cite{hoffman2016transience, johnson2016critical} proved that there is a critical $\mu_c(d)=\Theta(d)$ above which $\FM(d,\tilde p,\Poi(\mu))$ is recurrent and below which it is transient.

Wanting to isolate the role of the drift from the tree structure in the phase transition, Beckman, Frank, Jiang, Junge, and Tang introduced $\FM(d,p,1)$ and its critical value $p_d$ \cite{beckman2019frog}. It is easy to see that the stochastically larger process with all particles initially active is transient whenever $p < 1/(d+1)$. Moreover, the process with no activation is recurrent for $p\geq 1/2$. So, $p_d \in [1/(d+1),1/2].$ An intriguing aspect of $\FM(d,p,1)$ is that several intuitive monotonicity statements have evaded proof. We say that $Y$ \emph{stochastically dominates} $X$ if there is a coupling such that $X \leq Y$ almost surely. This is denoted by $X \preceq Y$. 

 \begin{conjecture}[\cite{beckman2018asymptotic, guo2022minimal, bailey2023critical}]
 \quad 
     \begin{enumerate}[label = (\roman*)]
        \item If $d\leq d'$ and $p \leq p'$, then $V_{\FM(d,p,\nu)}\preceq V_{\FM(d',p',\nu)}$.\footnote{Conjecture (i) is known in the special case that $d'=kd$ and $p'=p$ \cite[Proposition 1.2]{beckman2019frog}.} 
        \item $p_{d+1} < p_d$.
        \item $\lim_{d \to \infty} p_d = (2-\sqrt 2)/4 \coloneqq q^\ast$ ($\approx 0.1464$) the critical drift for a branching random walk that doubles only when moving away from the root.
     \end{enumerate}
 \end{conjecture}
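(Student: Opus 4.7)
The plan is to tackle the three parts separately, since they appear to require different machinery. For part (i), the approach is to construct an explicit coupling on a common probability space. To compare $\FM(d',p',\nu)$ with $\FM(d,p,\nu)$ where $d\le d'$ and $p\le p'$, I would first embed $\mathbb T_d$ as a subtree of $\mathbb T_{d'}$ by selecting $d$ of the $d'$ children at every vertex, and then drive each active particle with a single $\mathrm{Uniform}[0,1]$ variable per step: the lower-drift walk moves rootward iff $U<p$, the higher-drift walk iff $U<p'$, and on child steps one re-uses the same uniform on the embedded children and samples extra uniforms for the new children. The footnote case $d'=kd,\ p'=p$ would serve as a base for an inductive comparison generation-by-generation. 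The difficulty is that the ``same'' particle's trajectories diverge almost immediately under this coupling, so the activation sets become incomparable and pointwise domination of visited sets is lost; I would try to recover monotonicity only at the level of root-visit counts by a careful recursive decomposition over subtrees rooted at children of $\varnothing$.

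For part (ii), a quantitative version of (i) should be enough: if one shows that adding a child at fixed $p$ strictly increases the expected number of root visits within some finite horizon by an amount uniform in $p$ near $p_d$, then interpolating in $p$ yields $p_{d+1}<p_d$. A promising concrete route is through the self-similar variant whose monotonicity is established in the paper: writing $p_d$ as a fixed point of a recursion indexed by self-similar critical parameters and propagating strict inequality through that recursion. For part (iii), the upper bound $\limsup_d p_d\le q^\ast$ should follow from dominating the frog model by the doubling-away branching random walk, since as $d\to\infty$ the offspring pattern produced by a single excursion from the root ought to concentrate on exactly doubling when moving away. For the lower bound $\liminf_d p_d\ge q^\ast$, I would Poissonize: use the thresholds $\mu_c(d)=\Theta(d)$ of \cite{johnson2016critical} to compare $\FM(d,p,1)$ with $\FM(d,p,\Poi(\mu))$ after thinning, then pass to the large-$d$ limit where the local picture becomes the announced BRW.

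The main obstacle, shared by all three parts, is the absence of any genuine monotone coupling for the frog dynamics: activation creates a path-dependent, non-local interaction between particles that defeats every naive coupling one writes down. Within this, I expect the lower bound in part (iii) to be the most stubborn, because it requires ruling out that the frog model is essentially more efficient than the BRW by an order-one amount even as the local geometry converges; the Poissonization route only gives the right asymptotic density of activated frogs and still leaves a sharp constant to extract.
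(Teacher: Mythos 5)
The statement you are addressing is a \emph{conjecture}: the paper does not prove it, and indeed its main theorems are advertised precisely as partial evidence for it (\thref{thm:m1} gives monotonicity in $d$ for the self-similar model only, \thref{thm:m2} gives strict monotonicity of the surrogate critical value $q_d$ only, and the $S_m$ bounds are upper bounds whose known limit is $1/6$, strictly above $q^\ast$). Your proposal is therefore not comparable to a proof in the paper; it is a plan, and each branch of the plan stalls at exactly the obstruction the paper identifies. For (i), the step-by-step uniform coupling you describe is the ``naive coupling'' that the literature already knows to fail: once the two walks take different steps, the sets of activated sites in $\TT_d$ and $\TT_{d'}$ are no longer nested, and there is no recursive decomposition over subtrees of $\varnothing$ that restores domination of root-visit counts, because activation is a global, path-dependent interaction. (The paper's way around this in \thref{thm:m1} is to change the model: restrict to non-backtracking paths, force away-moving frogs to run to a freezing barrier so every newly woken frog has a visited child, and exploit the inequality $(1-p_d^*)\tfrac{n}{d} < (1-p_{d+1}^*)\tfrac{n+1}{d+1}$ valid only when $n<d$. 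None of this is available for the full model.) For (ii), deriving strict monotonicity from ``a quantitative version of (i)'' presupposes (i), and the paper's \thref{thm:m2} concerns $q_d$, not $p_d$; there is no known identification of $p_d$ as a fixed point of a recursion through which strictness could be propagated.

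For (iii) you have the two directions reversed, which is a substantive error. The branching random walk that doubles on away-steps stochastically \emph{dominates} the frog model (it never loses the branching to revisited sites), so its transience implies transience of $\FM(d,p,1)$; this yields the \emph{lower} bound $\liminf_{d\to\infty} p_d \geq q^\ast$, and that is the easy direction. The genuinely open direction is the \emph{upper} bound $\limsup_{d\to\infty} p_d \leq q^\ast$: one must show the frog model is recurrent for $p$ slightly above $q^\ast$ once $d$ is large, i.e.\ that revisits cost only a vanishing amount of branching. The paper's discussion of the birthday-paradox heuristic and the conjectured rate $p_d-q^\ast=\Theta(d^{-1/2})$ makes clear why this is hard, and the fact that the best provable upper bounds converge only to $1/6>q^\ast$ shows that the existing self-similar/Poissonized machinery (including the $\mu_c(d)=\Theta(d)$ results you cite, which concern the unbiased walk $\tp=1/(d+1)$ and are not relevant to drifts near $q^\ast$) cannot close the gap.
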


 The main result of \cite{hoffman2016transience} can be restated as $p_2 = 1/3$. However, the lack of monotonicity in Conjecture (i) makes it unclear if \[S_m \coloneqq \sup_{d \geq m} p_d\] is bounded from above by $1/3$. In testament to this uncertainty, the first bound proven on $S_m$ was that $S_3 \leq 0.4155$ \cite[Theorem 1.1]{beckman2019frog}. Guo, Tang, and Wei later established the bound $S_3 \leq 1/3$ which implies the ``sharp'' statement $S_2 = 1/3$  \cite{guo2022minimal}. This was further improved by Bailey, Junge, and Liu to $S_3 \leq 5/17$ \cite{bailey2023critical} which implies that $p_d < p_2$ for $d \geq 3$.  Bailey, Junge, and Liu also proved that $S_4 \leq 27/100$ and outlined a computer-assisted method for obtaining better bounds for larger $m$.

 \subsection{Results}
 
 Our results provide further insight into Conjecture (i), (ii), and (iii). 
  The first uses a computer-assisted proof to carry out the proposed method from \cite{bailey2023critical} for bounding $S_m$. The general idea is to bound the critical drift for a non-backtracking variant that has stochastically fewer root visits (see \thref{thm:m1}).

 \begin{theorem} \thlabel{thm:Sm}
     $S_m$ satisfies the bounds in the table below.
     \begin{center}
     \small 
     \begin{tabular}{c|cccccccccccc}
         $m$ & $2$ & $3$ & $4$ & $5$ & $6$ & $7$ & $8$ & $9$ & $10$ & $11$ & $12$ & $13$ \\
         \hline
         \\
         \vspace{-.6 cm}
         \\
         $S_m \leq$ & $\f{55}{159}$ & $\f{42}{145}$ & $\f{40}{153}$ & $\f{23}{94}$ & $\f{46}{197}$ & $\f{23}{102}$ & $\f{38}{173}$ & $\f{20}{93}$ & $\f{15}{71}$ & $\f{5}{24}$ & $\f{7}{34}$ & $\f{11}{54}$
         \\
         \vspace{-.3 cm}
         \\
          $\approx$ & $.346$ & $.290$ & $.261$ & $.245$ & $.234$ & $.225$ & $.220$ & $.215$ & $.211$ & $.208$ & $.206$ & $.204$
    \end{tabular}
    \end{center}
 \end{theorem}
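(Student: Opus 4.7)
The plan is to carry out, via computer assistance, the strategy from \cite{bailey2023critical}: reduce $\FM(d,p,1)$ to the non-backtracking variant $\NBFM$ (cf.\ \thref{thm:m1}), whose number of root visits is stochastically dominated by $V_{\FM(d,p,1)}$. Consequently, it suffices, for each $m \in \{2,\ldots,13\}$, to exhibit the rational $p^*_m$ from the table at which $\NBFM$ on $\TT_d$ is recurrent for every $d \geq m$; this gives $p_d \leq p^*_m$ for every such $d$, and hence $S_m \leq p^*_m$.

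First I would set up (or import from \cite{bailey2023critical}) the self-similar recursion governing $\NBFM$. Because the non-backtracking rule forbids an awake frog from ever returning above its current level, the contributions of the $d$ child subtrees to the root-visit count decouple into an i.i.d.\ family of copies of a simpler process on $\TT_d$. Writing $q = q(d,p)$ for an appropriate ``no-visit'' functional attached to a child subtree, this yields a fixed-point relation of the form $q = \Phi_{d,p}(q)$ for an explicit rational map $\Phi_{d,p}$. Truncating the recursion at a finite depth $k = k(m)$ produces an explicit rational lower bound $L_{d,p,k}$ on $\Pr(V_{\NBFM(d,p,1)} = \infty)$, and positivity of $L_{d,p,k}$ certifies recurrence.

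For each $m$ I would evaluate $L_{d,\,p^*_m,\,k(m)}$ in exact rational arithmetic and verify it is positive. A monotonicity of $\Phi_{d,p}$ in $d$ --- intuitively, more children make activation easier --- should reduce the uniform statement over $d \geq m$ to the single extremal case $d = m$; if the monotonicity only kicks in past some $d_0(m)$, the finitely many intermediate values of $d$ are verified individually. Given the tabulated denominators (up to $197$), the rationals $p^*_m$ are evidently chosen so that the recursion stays numerically manageable while still beating the best previously known bound at that $m$ (and improving $S_3 \leq 5/17$ and $S_4 \leq 27/100$ from \cite{bailey2023critical}).

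The principal obstacle is the computational one: the truncation depth $k(m)$ needed to certify recurrence at the sharpest available $p^*_m$ is expected to grow with $m$, while the symbolic complexity of $L_{d,p,k}$ grows roughly like $d^{k}$. I would address this by working in exact rational arithmetic at each fixed $p^*_m$ (avoiding symbolic manipulation in $p$) and by choosing $p^*_m$ with modest denominators as an engineering tradeoff between sharpness and depth. A secondary issue is making the monotonicity-in-$d$ step precise at the level needed to deduce uniformity; if the cleanest available form does not cover all $d \geq m$, one complements it with an asymptotic bound for large $d$ coming from comparison with the branching random walk of Conjecture~(iii), whose critical drift is $q^\ast = (2-\sqrt 2)/4$, far below each $p^*_m$ in the table.
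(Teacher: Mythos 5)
Your overall architecture --- pass to a non-backtracking/self-similar variant whose root visits are dominated by those of $\FM(d,p,1)$, exploit self-similarity to get a computable recursion, and verify a certificate at each tabulated rational $p$ --- matches the paper in outline, but your central certificate is not the one that works, and as stated it has a gap. No finite-depth truncation can produce a positive lower bound on $\P(V=\infty)$: a depth-$k$ computation only controls events like $\{V\ge N\}$, whereas recurrence comes from a bootstrapping argument in which each generation of the self-similar model must regenerate at least a $\Poi(\lambda)$ number of returning particles \emph{for every} $\lambda$. The actual certificate (Proposition 2.6 of \cite{bailey2023critical}, restated as \eqref{eq:suff}) is $M^{d,p}=\sup_{\lambda\ge 0}\E\bigl[e^{\lambda-p_d^*-\hat p(1+U)\lambda}\bigr]<1$, where $U=U(d,p,\lambda)$ counts activated leaves in an explicit star-graph process; this is an infinite family of inequalities indexed by the Poisson intensity $\lambda$, not a single rational positivity check. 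The computational content of the paper's proof is (a) the inductive formula for the law of $U$ in \thref{lem:inductive}, which keeps the symbolic expressions manageable up to $d=13$, and (b) a change of variables $f(-c\log y)=g(y)$ with rational $c$ chosen so that $g$ is a polynomial, after which Sturm's theorem rigorously isolates the unique maximizer on $(0,1]$ and certifies the maximum is $<1$. Your proposal has no mechanism for handling the supremum over $\lambda$, which is where the rigor actually lives. (You also need the comparison of \cite{johnson2018stochastic} to pass from the $\Poi(1)$ initial condition, for which the recursion is tractable, to the one-per-site model defining $S_m$.)

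Second, the uniformity over all $d\ge m$ is not a soft monotonicity remark. Monotonicity in $d$ of the full frog model is open (Conjecture (i)); the paper proves it for the self-similar model (\thref{thm:m1}) via a delicate coupling --- described as the strongest contribution of the work --- and that theorem is exactly what converts a recurrence certificate at $d=m$ into a bound on $S_m$. Your fallback for large $d$, comparison with the branching random walk of Conjecture (iii), points the wrong way: that branching random walk stochastically dominates the frog model, so its recurrence at drifts down to $q^*$ yields \emph{lower} bounds on $p_d$ and cannot certify frog-model recurrence at any $p^*_m$.
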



\begin{figure}
\centering
\includegraphics[width = .9\textwidth]{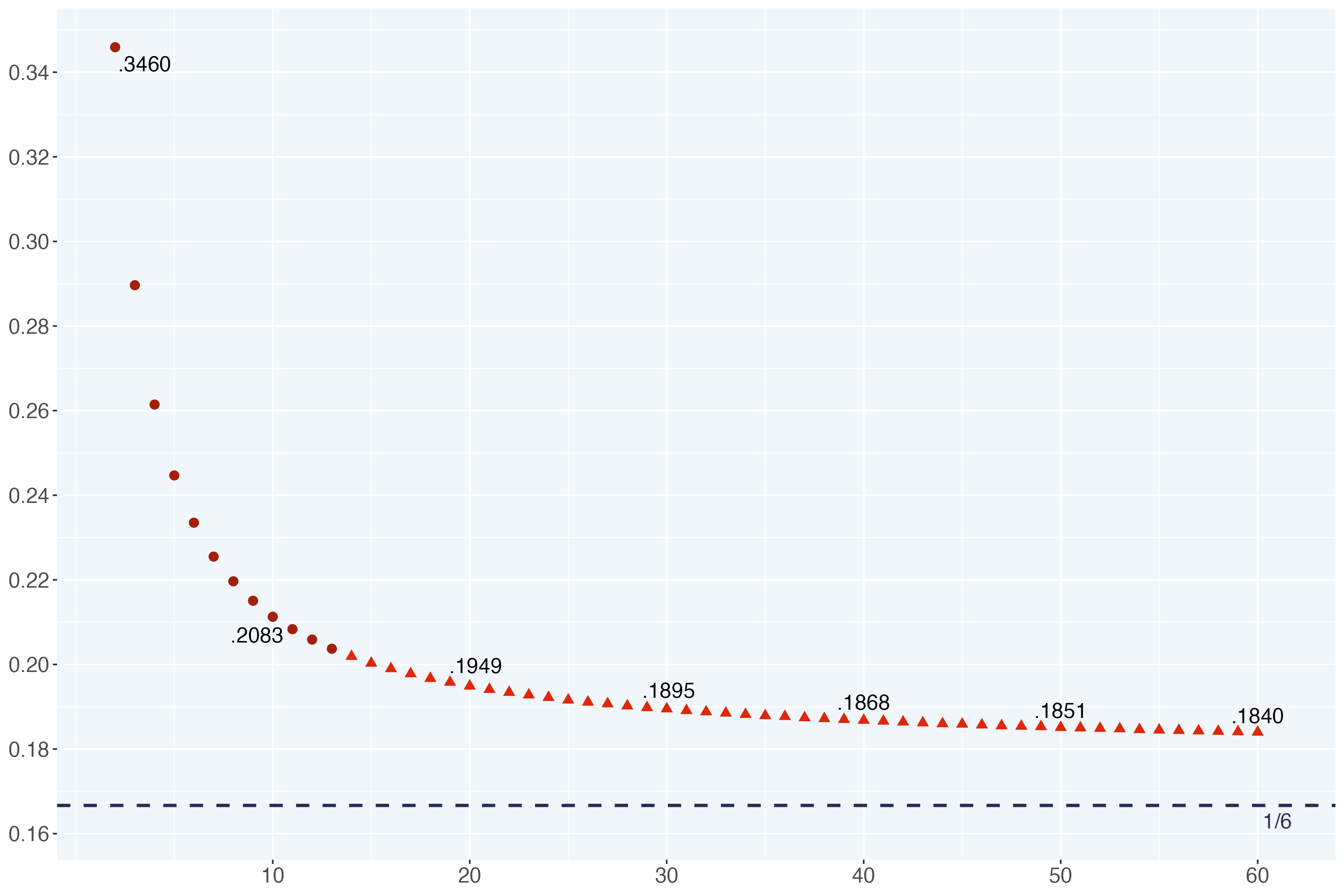}
\caption{Bounds on $S_m$ for $2 \leq m \leq 60$. The horizontal axis is the value of $m$ and the vertical axis is the bound we get for $S_m$. Circles are rigorous bounds from \thref{thm:Sm} and triangles are numerically approximated bounds. The dashed line at $1/6$ is the known limit of the sequence of upper bounds (not the limit of $S_m$).}\label{fig:up-bounds}
\end{figure}

Beyond $m=13$ we encounter runtime issues (even getting to $m=13$ requires the efficiency-boosting inductive scheme described in \thref{lem:inductive}). However, we are able to use the method to make non-rigorous approximations of bounds for larger $m$. See Figure~\ref{fig:up-bounds}.
It is proven in \cite{bailey2023critical} that the upper bounds on $S_m$ will converge to $1/6$ as $m \to \infty$ (although $S_m$ ought to converge to $q^*$ from Conjecture (iii)). The convergence in Figure~\ref{fig:up-bounds} appears slow. This leads to a refinement to Conjecture (iii). We find it plausible that $p_d - q^* = \Theta(d^{-1/2})$. This might occur because for large $d$ the frog model behaves like the branching random walk that doubles when moving away from the root until vertices start getting revisited. The birthday paradox tells us that repeated visits to child vertices of $\varnothing$ occur after $O(\sqrt d)$ visits to $\varnothing$. The branching random walk visits $\varnothing$ at a constant (linear in time) rate, as does the frog model  \cite{hoffman2019infection} in certain regimes. This suggests that $d^{-1/2}$ may play a role in the point at which the frog model begins to lag behind the branching random walk.


The rest of our results are for an important variant of $\FM(d,p,\nu)$ called the \emph{self-similar frog model}, which we denote as $\SFM(d,p,\nu)$. This process, introduced in \cite{hoffman2017recurrence}, is the {only} known tool for proving recurrence of a frog model on trees. Put briefly, frogs in the self-similar frog model are restricted to the non-backtracking (loop-erased) portion of their random walk paths, and only one frog is allowed to move away from the root to each subtree. This results in a stochastically smaller model in terms of the root visits that is more amenable to anaylsis.  



In $\SFM(d,p,\nu)$ the jump distribution is different for frogs that just woke up versus for those that have already taken a step. Let
\begin{align}
	p_d^* = p_d^*(p) \coloneqq \f{ p(d-1)}{d - (d+1) p} \text{ and } \hat p = \hat p(p) \coloneqq \f{p}{1-p}. \label{eq:p*}
\end{align}
Initially, there is one active frog at the root. It moves to a uniformly sampled child vertex in the first step. Just activated frogs move towards the root with probability $p^*_d$, and otherwise away from the root to a uniformly sampled child vertex. For subsequent steps, if the previous step was towards the root, then the next step will be towards the root with probability $\hat p$. If the previous step was away from the root, all subsequent steps will be away from the root to uniformly sampled child vertices. Any particles that visit the root are killed there and no longer participate in the process. The last modification is that particles moving away from the root are killed upon visiting a vertex that has already been visited. If multiple active particles attempt to move away from the root to the same unvisited vertex, then one is chosen to continue its path and the others are killed. 

Let $V_{\SFM(d,p,\nu)}$ denote the total number of root visits. The self-similar frog model is dominated by the usual frog model. Indeed, \cite{guo2022minimal} worked out the transition probabilities $p_d^*$ and $\hat p$ so that $V_{\SFM(d,p,\nu)} \preceq V_{\FM(d,p,\nu)}$. In support of Conjecture (i), we prove that $V_{\SFM(d,p,\nu)}$ is monotone in $d$. 
\begin{theorem}\thlabel{thm:m1}
    $V_{\SFM(d,p,\nu)} \preceq V_{\SFM(d+1,p,\nu)}$ for all $p \in (0,1/2), d\geq 2$ and $\nu$.
\end{theorem}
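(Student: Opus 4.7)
The plan is to construct a coupling of $\SFM(d,p,\nu)$ and $\SFM(d+1,p,\nu)$ on a common probability space that injects root visits of the smaller process into those of the larger. The quantitative driver of the coupling is the following monotonicity: setting $\hat p=p/(1-p)$, \eqref{eq:p*} can be rewritten as $p^*_d=\hat p(d-1)/(d-\hat p)$, which is strictly increasing in $d$ whenever $p<1/2$ (equivalently, $\hat p<1$). Thus $p^*_d<p^*_{d+1}$, while $\hat p$ itself does not depend on $d$.

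I would embed $\mathbb T_d$ into $\mathbb T_{d+1}$ by designating one child of each vertex of $\mathbb T_{d+1}$ as ``extra,'' so the remaining $d$ children form a copy of $\mathbb T_d$, and place i.i.d.\ $\nu$-distributed dormant counts at each vertex of $\mathbb T_{d+1}$, shared between the two processes. At each potential pulse event attach two shared independent uniforms $U,V\in[0,1]$. The directional decisions are coupled as follows: the $\SFM(d)$ pulse moves toward the root iff $U<p^*_d$, and the $\SFM(d+1)$ pulse moves toward the root iff $U<p^*_{d+1}$. Since $p^*_d<p^*_{d+1}$, every upward choice in $\SFM(d)$ is matched by an upward choice in $\SFM(d+1)$, and the common continuation parameter $\hat p$ lets all subsequent root-ward coin flips be shared, so paired upward frogs reach $\varnothing$ together. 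For downward pulses, $V$ selects children via a partition of $[0,1]$ identifying the $d$ children of $\mathbb T_d$ with the $d$ non-extra children of $\mathbb T_{d+1}$; the mismatched events---where $\SFM(d+1)$ goes up while $\SFM(d)$ goes down, or where $\SFM(d+1)$ picks the extra child---create activity only in $\SFM(d+1)$.

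The main obstacle is that this pulse-level coupling does not immediately yield process-level dominance: mismatched events create downward chains in $\SFM(d)$ with no coupled partner in $\SFM(d+1)$, whose downstream pulses must still be absorbed by the larger process. To handle this I would truncate both models at depth $n$---killing frogs that reach depth $n$ so that the number of root visits is almost surely finite---and prove $V_{\SFM(d,p,\nu)}^{(n)}\preceq V_{\SFM(d+1,p,\nu)}^{(n)}$ by induction on $n$. The base case $n=0$ is trivial, and the inductive step applies the pulse coupling at depth $1$ together with the induction hypothesis on each subtree rooted at a depth-$1$ vertex; the subtree in $\mathbb T_{d+1}$ rooted at the extra child of $\varnothing$ contributes a non-negative surplus preserving the inequality. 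Letting $n\to\infty$ and invoking monotone convergence completes the argument. The most delicate bookkeeping will be respecting the SFM conflict-resolution rule---one surviving frog per unvisited vertex---which may force the child-assignment step to be defined recursively across subtrees rather than pulse-by-pulse in isolation.
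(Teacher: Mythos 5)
There is a genuine gap, and it sits exactly at the point the paper identifies as the main difficulty. Your directional coupling ties the first step of a newly activated frog to a shared uniform $U$, with $\SFM(d)$ going up iff $U<p_d^*$ and $\SFM(d+1)$ going up iff $U<p_{d+1}^*$. Since $p_d^*<p_{d+1}^*$, the mismatch event $p_d^*\le U<p_{d+1}^*$ sends the $\SFM(d)$ frog \emph{down} (where it can wake new frogs and seed whole uncoupled activation cascades in the smaller process) while the $\SFM(d+1)$ frog goes up. Contrary to your claim that mismatches ``create activity only in $\SFM(d+1)$,'' the surplus activity here is in $\SFM(d)$, which is the wrong direction for the dominance you want; you acknowledge this a paragraph later but the proposed fix does not repair it. The depth-$n$ induction with a ``non-negative surplus from the extra child'' cannot absorb these cascades: the extra child's subtree only contributes if a frog actually visits it, and even then it is not matched to the specific uncoupled chains created in $\SFM(d)$. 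More fundamentally, a direction-matching coupling cannot work at all when the newly activated vertex has all children unvisited: the probability that a just-awoken frog moves away from the root to a \emph{new} vertex is $(1-p_d^*)\frac{n}{d}$ in $\SFM(d)$ versus $(1-p_{d+1}^*)\frac{n+1}{d+1}$ in $\SFM(d+1)$, and with $n=d$ the former is strictly larger (since $1-p_d^*>1-p_{d+1}^*$), so $\SFM(d)$ discovers new sites more often and no coupling can reverse that pulse-by-pulse.

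The paper's proof supplies two ingredients your proposal is missing. First, the ``fortunate inequality'': $(1-p_d^*)\frac{n}{d}<(1-p_{d+1}^*)\frac{n+1}{d+1}$ holds precisely when $n<d$, i.e.\ when at least one child of the activated vertex has already been visited. Second, a device that guarantees this precondition: frogs that step away from the root are forced to continue their non-backtracking path all the way to a killing barrier at distance $n+1$, so every newly awoken frog sits at a vertex with at least one visited child. With that in hand the coupling is not ``match directions'' but ``match discoveries'': couple so that $f'$ in $\SFM(d+1)$ moves to a new vertex whenever $f$ in $\SFM(d)$ does, and kill \emph{both} frogs whenever $f$ moves to an already-visited vertex, producing a killed subprocess of $\SFM(d+1)$ that still dominates $\SFM(d)$ visit-for-visit. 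Your truncation-at-depth-$n$ and limit $n\to\infty$ framework is sound and parallels the paper's, but without the barrier device and the discovery-matching (rather than direction-matching) coupling, the inductive step fails.
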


 \thref{thm:m1} is the strongest contribution of this work. It is interesting foremost because it supports Conjecture (i). Another benefit of \thref{thm:m1} is that the main results from  \cite{beckman2018asymptotic} and \cite{guo2022minimal} are immediate corollaries. For example, since \cite{hoffman2017recurrence} proved that $\SFM(2,1/3,1)$ is recurrent, \thref{thm:m1} implies that $S_3 \leq 1/3$. Also, the bounds in \cite{bailey2023critical} are strengthened after applying \thref{thm:m1}. For example, together with Theorems \ref{thm:Sm} and \ref{thm:m1}, the bound $S_4 \leq 0.27$ from \cite{bailey2023critical} is improved to $S_4 \leq 0.262$.

The main difficulty with proving \thref{thm:m1} is that $\SFM(d,p,\nu)$ and $\SFM(d+1,p,\nu)$ have different probabilities  that the first step taken by a newly activated frogs is towards the root ($p_d^* < p_{d+1}^*$). In one way this is good for $\SFM(d+1,p,\nu)$ since frogs are more likely to move towards the root. However, it is not monotonically helpful since moving away from the root sometimes comes with the benefit of waking more frogs. A fortunate inequality, that had previously gone unnoticed, is that so long as at least one vertex below a just activated site, say $v$, has been visited, the probability a particle activated at $v$ moves away from the root to a new site in $\SFM(d+1,p,\nu)$ is larger than the probability in $\SFM(d, p,\nu)$. An innovation in our coupling is to allow frogs that have jumped away from the root to keep jumping until reaching a freezing barrier. This ensures that newly awoken frogs will have at least one visited child vertex below them. Thus, there is a way to couple the two models, after sometimes eliminating frogs from $\SFM(d+1,p,\nu)$, so that there is a one-to-one correspondence between active frogs at all distances from the root.

 Our last result is more technical and involves Conjecture (ii). We define a new critical value for $\SFM(d,p,\Poi(1))$ and prove that it is strictly monotone. Informally speaking, the critical value is the smallest value of $p$ such that the only approach for proving $\SFM(d,p,\Poi(1))$ is recurrent applies. To define this formally takes some extra notation. 
 
Consider a star graph with root $\varnothing$, central vertex $\varnothing '$, and leaves $v_1, \hdots, v_d$ (see Figure \ref{fig:Aasystem}). There is a $\Poi(1)$ number of active particles at $\varnothing'$ and an independent $\Poi(\lambda)$ number of active particles at $v_1$. An independent $\Poi(\lambda)$-distributed number of dormant particles is placed at each of $v_2,\hdots, v_d$. 

 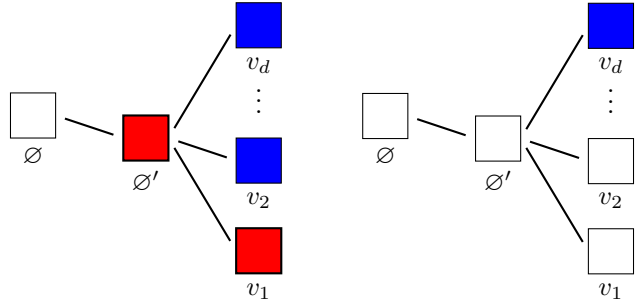
\begin{figure}
  \begin{center}
\mbox{
\subfigure
{
\begin{tikzpicture}[scale = .6]
\draw (-5,3) -- node[below] {$\varnothing$} ( -4,3) -- node (root) {} ( -4,4) -- (-5,4) -- cycle; 

\draw[thick, fill = red] (-2.5,2.5) -- node[below] {$\varnothing'$} (-1.5,2.5) -- node (root') {} (-1.5,3.5) -- (-2.5,3.5) -- node (root'2) {} (-2.5,2.5) --cycle;
\draw[fill = blue] (0,5) --node[below] {$v_d$} (1,5) -- (1,6) -- (0,6) -- node (vd) {} (0,5)--cycle;
\node at (.5,4) {$\vdots$};

\draw[fill = blue] (0,2) --node[below] {$v_2$} (1,2) -- (1,3) -- (0,3) -- node (v2) {}  (0,2)--cycle;
\draw[thick,fill = red] (0,0) --node[below] {$v_1$} (1,0) -- (1,1) -- (0,1) --  node (v1) {}  (0,0)--cycle;
\draw[thick] (root') -- (vd);
\draw[thick] (root') -- (v2);
\draw[thick] (root') -- (v1);
\draw[thick] (root'2) -- (root);

\end{tikzpicture}
}

\subfigure{

\begin{tikzpicture}[scale = .6]
\draw[thick] (root'2) -- (root);
\draw (-5,3) -- node[below] {$\varnothing$} ( -4,3) -- node (root) {} ( -4,4) -- (-5,4) -- cycle; 
\draw (-2.5,2.5) -- node[below] {$\varnothing'$} (-1.5,2.5) -- node (root') {} (-1.5,3.5) -- (-2.5,3.5) -- node (root'2) {} (-2.5,2.5) --cycle;
\draw[fill = blue] (0,5) --node[below] {$v_d$} (1,5) -- (1,6)  -- (0,6) -- node (vd) {} (0,5)--cycle;
\node at (.5,4) {$\vdots$};
\draw (0,2) --node[below] {$v_2$} (1,2) -- (1,3) -- (0,3) -- node (v2) {}  (0,2)--cycle;
\draw (0,0) --node[below] {$v_1$} (1,0) -- (1,1) -- (0,1) --  node (v1) {}  (0,0)--cycle;
\draw[thick] (root') -- (vd);
\draw[thick] (root') -- (v2);
\draw[thick] (root') -- (v1);
\node at (-6,6) {};
\end{tikzpicture}
 }
}
  \end{center}
  \caption{The process used to define $U(d,p,\lambda)$. Red sites contain particles that are initially active and blue sites contain initially dormant particles. $U(d,p,\lambda)$ is the number of vertices among $v_2,\hdots,v_d$ that are ever visited. Empty boxes on the right at $v_1,\hdots, v_d$ represent sites at which particles have been activated.}
  \label{fig:Aasystem}
\end{figure}

The active particles started at $\varnothing'$ move to $\varnothing$ independently with probability $p^*_d$ and otherwise each moves to an independently and uniformly sampled vertex from $v_1,\hdots, v_d$. Active particles at $v_i$ move to $\varnothing '$ with probability $1$, and then to either $\varnothing$ with probability $\hat p$ or otherwise to a uniformly sampled vertex among $\{v_1,\hdots, v_d\} \setminus \{v_i\}$. Whenever active particles encounter dormant particles, the dormant particles become active. When a particle moves to a leaf  or to $\varnothing$, it remains frozen there for all subsequent time steps. 
%
We define $U= U(d,p,\lambda)$ to be how many of $v_2,\hdots,v_d$ have been visited after all particles are either frozen or dormant. The random variable $U$ is important because understanding its distribution leads to a sufficient condition for recurrence. 

Define 
\[M^{d,p} \coloneqq \sup_{\lambda \geq 0} \E[e^{\lambda - p_d^* - \hat p (1+U)\lambda}].\]
It follows from \cite[Proposition 2.6]{bailey2023critical} that 
\begin{align}
    \text{$M^{d,p}<1$ implies that $\SFM(d,p,\Poi(1))$ is recurrent. } \label{eq:suff}
\end{align}We define the critical value
\[q_d \coloneqq \inf\{p \colon M^{d,p} < 1\}\]
that corresponds to the threshold at which the proof technique in \cite{bailey2023critical, hoffman2016transience, johnson2016critical} fails. We do not have a proof, but find it plausible that $q_d$ is equal to the more natural critical value $\inf\{ p \colon \SFM(d,p,\Poi(1)) \text{ is recurrent}\}$. Our final result is that $q_d$ is strictly monotone.

\begin{theorem}\thlabel{thm:m2}
    $q_{d+1} < q_d$ for all $d \geq 2$.
\end{theorem}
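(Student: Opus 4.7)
The strategy is to prove the strict inequality $M^{d+1,p} < M^{d,p}$ for every $p \in (0,1/2)$, and then transfer it to $q_{d+1} < q_d$ by continuity in $p$. Two ingredients enter: comparing $p_d^*$ with $p_{d+1}^*$, and comparing the distributions of $U(d,p,\lambda)$ and $U(d+1,p,\lambda)$. Note that $\hat p = p/(1-p)$ does not depend on $d$.

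A direct algebraic calculation yields
\[
p_{d+1}^* - p_d^* \;=\; \frac{p(1-2p)}{[d-(d+1)p]\,[(d+1)-(d+2)p]} \;>\; 0
\]
for $p \in (0,1/2)$, so the factor $e^{-p_d^*}$ in
\[
M^{d,p} \;=\; e^{-p_d^*}\sup_{\lambda\geq 0}e^{\lambda}\,\E\bigl[e^{-\hat p(1+U(d,p,\lambda))\lambda}\bigr]
\]
strictly decreases with $d$. To handle the MGF factor I would establish the stochastic dominance $U(d,p,\lambda) \preceq U(d+1,p,\lambda)$ via a coupling analogous to that used for \thref{thm:m1}. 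Obtain the $(d+1)$-process from the $d$-process by adjoining one extra leaf $v_{d+1}$ with an independent $\Poi(\lambda)$ dormant population, and share the $\Poi(1)$ and $\Poi(\lambda)$ populations at $\varnothing'$ and $v_1,\ldots,v_d$. Each hop from $\varnothing'$ uses a shared $\mathrm{Unif}[0,1]$: since $p_{d+1}^* > p_d^*$, every particle that jumps to $\varnothing$ in the $d$-process also jumps to $\varnothing$ in the $(d+1)$-process. Leaf choices are coupled so that every visit to $\{v_2,\ldots,v_d\}$ in the $d$-process is matched by a visit (possibly to $v_{d+1}$) in the $(d+1)$-process; since $e^{-\hat p(1+x)\lambda}$ decreases in $x$, the dominance yields $\E[e^{-\hat p(1+U(d+1))\lambda}] \leq \E[e^{-\hat p(1+U(d))\lambda}]$ for every $\lambda \geq 0$.

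Combining the two ingredients produces the uniform contraction $M^{d+1,p} \leq e^{p_d^*-p_{d+1}^*}\,M^{d,p}$, which is strict since $M^{d,p} \geq e^{-p_d^*} > 0$. Joint continuity of $p \mapsto M^{d,p}$ and $p \mapsto M^{d+1,p}$ on a neighborhood of $q_d$ (a routine check from the bounded support $U \leq d-1$ and the exponential decay of the integrand for $p > 1/(d+1)$) then gives $M^{d,q_d}=1$, and hence $M^{d+1,q_d} \leq e^{p_d^*-p_{d+1}^*} < 1$. Continuity of $M^{d+1,\cdot}$ furnishes some $p < q_d$ with $M^{d+1,p} < 1$, i.e., $q_{d+1} < q_d$.

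The principal obstacle is the coupling of $U(d)$ and $U(d+1)$. Individual leaves are harder to reach in the $(d+1)$-process (branching probability $1/(d+1)$ per leaf versus $1/d$), so the extra leaf must absorb the lost probability of hitting each of $v_2,\ldots,v_d$. Secondary excitations complicate matters: a particle activated at $v_j$ hops back through $\varnothing'$ and may itself wake a further leaf, and the coupling must ensure these cascades in the $d$-process do not outpace those in the $(d+1)$-process. I expect this will parallel the ``freezing barrier'' construction sketched for \thref{thm:m1}, adapted to the one-level star geometry.
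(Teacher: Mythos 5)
Your overall architecture coincides with the paper's: reduce to the strict inequality $M^{d+1,p}<M^{d,p}$, obtained from $p_{d+1}^*>p_d^*$ together with the stochastic dominance $U(d,p,\lambda)\preceq U(d+1,p,\lambda)$, and then convert this into $q_{d+1}<q_d$ by continuity of $p\mapsto M^{d,p}$. Your uniform multiplicative bound $M^{d+1,p}\leq e^{p_d^*-p_{d+1}^*}M^{d,p}$ is in fact a slightly cleaner route to strictness than the paper's, which instead verifies that the supremum is attained on $(0,1]$ so that a pointwise strict inequality of the $f$'s passes to the suprema. The continuity you call routine is the content of \thref{lem:M}(i) and genuinely uses $p>1/(d+1)$ to keep all exponents of $y=e^{-\lambda}$ nonnegative, but your instinct there is sound, and your endgame ($M^{d+1,q_d}<1$, then perturb $p$ downward) matches the paper's.

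The genuine gap is the dominance $U(d,p,\lambda)\preceq U(d+1,p,\lambda)$, which you correctly flag as the principal obstacle but do not resolve, and the partial coupling you sketch points the wrong way. Coupling each hop from $\varnothing'$ through a shared uniform so that every particle reaching $\varnothing$ in the $d$-process also reaches $\varnothing$ in the $(d+1)$-process forces the $(d+1)$-process to dispatch weakly \emph{fewer} particles toward the leaves, so the promised matching of every leaf visit cannot hold; and even where a match exists, several distinct leaf visits could all be matched to $v_{d+1}$, so the number of \emph{distinct} visited leaves is not controlled. The paper's proof of \thref{lem:Um} supplies the two missing ingredients. First, the inequality
\[
(1-p_d^*)\,\frac{d-1}{d} \;<\; (1-p_{d+1}^*)\,\frac{d}{d+1}
\qquad (p<1/2,\ d\geq 2),
\]
which shows that although $1-p_{d+1}^*<1-p_d^*$, the \emph{total} Poisson number of particles dispatched from $\varnothing'$ to fresh leaves is stochastically larger in the $(d+1)$-star; this aggregate count, not the indicator of jumping to $\varnothing$, is the right quantity to couple. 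Second, a coupon-collector lemma (\thref{claim:coupon}): if two collectors currently hold equally many distinct coupons, the one drawing uniformly from $n+1$ coupons discovers a new one with probability at least that of the one drawing from $n$, so given at least as many draws the larger collector ends with at least as many distinct coupons. Iterating this through the activation cascade (each newly woken leaf contributing $\Poi((1-\hat p)\lambda)$ further draws, identically in $d$ since $\hat p$ does not depend on $d$) yields the dominance. Without these two steps your argument does not close.
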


This result is interesting because it supports the strict monotonicity claimed in Conjecture (ii). Moreover, it suggests that proving Conjecture (ii) may be difficult. The advantage of $q_d$ is that it is defined in terms of a concrete inequality satisfied by $M^{d,p}$. The self-similar nature of $\SFM(d,p,\Poi(1))$ as well as computational advantages unique to a Poisson-distributed number of frogs let us reduce this to analyzing functions. Even with these advantages the argument is not straightforward. The general case is unlikely to reduce to such tractable analysis.


\subsection{Organization}
We begin by proving \thref{thm:m1} in Section~\ref{sec:m1}. The result is needed to deduce \thref{thm:Sm}. In the next two sections we prove \thref{thm:Sm} and \thref{thm:m2}, respectively. The appendix contains additional details on the code we used to obtain \thref{thm:Sm} and Figure~\ref{fig:up-bounds}.




\section{Proof of \texorpdfstring{\thref{thm:m1}}{Theorem 2}} \label{sec:m1}

\begin{proof}
    Let $\SFM_n(d,p,\nu)$ denote the self-similar frog model with sleeping frogs placed at vertices within distance $n$ of the root. We will couple $\SFM_n(d,p,\nu)$ with a subprocess $\SFM_n'(d+1,p,\nu)$, dominated by $\SFM_n(d+1,p,\nu)$, that sometimes kills additional frogs. For $s \geq 0$, consider a time-changed version of the self-similar frog model that moves a randomly sampled awake frog at each step $s$. If the frog moves towards the root, then it takes one step and a new awake frog is sampled (with replacement) for step $s+1$. If the frog moves away from the root, then it samples a non-backtracking path to distance $n+1$ from the root and is killed there. The step $s$ is increased and a new frog, that has not been killed, is sampled. Let $\mathcal T^s_n(d,p,\nu)$ be the random subtree of sites within distance $n$ of the root that have been visited in $\SFM_n(d,p,\nu)$ after $s$ such steps.
    
    We take as our inductive hypothesis that there is an embedding $\psi^s \colon \mathcal T_n^s(d,p,\nu) \to \mathbb T_{d+1}$ that maps root to root ($\psi^s(\varnothing_d) = \varnothing_{d+1}$) and for each vertex $v \in \mathcal T_n^s(d,p,\nu)$
        \begin{enumerate}[label=(\roman*)]
            \item $\psi^s(v)$ has been visited in $\SFM'_n(d+1,p,\nu)$.
            \item There is a bijection between the frogs moving toward and away from the root at $v$ and $\psi^s(v)$.
        \end{enumerate}
  Once established, the inductive claim implies \thref{thm:m1}. This is because for any fixed $n$ the algorithm will 
  terminate with all frogs sleeping or killed after finitely many steps $s$. Thus the terminated algorithm produces the total number of visits to the root in $\SFM_n(d,p,\nu)$ which we have coupled to be identical as the process $\SFM_n'(d+1,p,\nu)$ that kills frogs and thus produces fewer root visits than $\SFM_n(d+1,p,\nu)$. Using monotonicity for the number of root visits in $n$ and taking $n \to \infty$ gives the desired result $V_{\SFM(d,p,\nu)} \preceq V_{\SFM(d+1,p,\nu)}$. 
        
         We now prove the inductive claim. Clearly (i) and (ii) are satisfied at $s=0$. It suffices to assume (i) and (ii) hold after $s$ steps and prove that they continue to hold after moving any one of the active frogs. Suppose that a frog $f$ at $v$ is selected to move on $\mathbb T_d$. Let $v' = \psi^s(v)$ and $f'$ be the corresponding frog at $v'$. Since any frogs that move away from the root are allowed to jump until being killed at distance $n+1$ from the root, we need only consider the cases that the previous jump of $f$ was towards the root or that $f$ was just awoken.


    Suppose that the last step $f$ took was towards the root. Both $f$ and its corresponding $f'$ at $v'$  will take another step towards the root with probability $\hat p$. We may then couple them to move in the same direction. If they move towards the root, then (i) and (ii) are preserved. Suppose $f$ and $f'$  move away from the root and that there are $n$ child vertices of $v$ that are yet to be visited. As $\psi^s$ is an embedding, there are $n+1$ child vertices of $v'$ that are yet to be visited. If $f$ does not move to a new site, then kill $f$ and $f'$. The frog $f$ moves to a new site with probability $n/d$. The frog $f'$ moves to a new site with  probability $(n+1)/(d+1) \geq n/d$. Thus, whenever $f$ moves to a new site $u$, there is a coupling that preserves the random walk law for $f'$ and has it move to a new site $u'$. We set $\psi^{s+1}(u) = u'$ and otherwise $\psi^{s+1} = \psi^s$. Further, we couple the number of particles discovered at $u$ and $u'$ to be the same. We then have $f$ and $f'$ repeat this coupling process until they reach distance $n+1$. 

    Suppose that the frogs $f$ and $f'$ were just woken at $v$ and $v'$, respectively. Suppose that $v$ has $n$ child vertices that are yet to be visited. Necessarily $v'$ has $n+1$ such vertices. Moreover, our requirement that frogs which have moved away from the root continue doing so until reaching distance $n+1$ from the root ensures that at least one child vertex below $v$ and $v'$ has been visited so that $n < d$. $f$ will move towards $\varnothing$ with probability $p_d^*< p_{d+1}^*$. So, if $f$ moves towards the root, we may couple $f'$ to do the same preserving (i) and (ii). $f$ will move away from the root to a new vertex with probability $(1- p_d^*)\f{n}{d}$. It is easy to verify that, so long as $n<d$, this is strictly less than the probability $(1-p_{d+1}^*)\f{n+1}{d+1}$ that $f'$ moves away to a new vertex. Thus, we may couple $f'$ to visit a new vertex whenever $f$ does. We then couple the number of frogs activated at the two sites. Lastly, if $f$ moves away from $\varnothing$ to an already visited vertex, then we kill both $f$ and $f'$. These rules were defined so that both $f$ and $f'$ move with the appropriate random walks, but $f'$ only discovers a new vertex when $f$ does. This preserves the embedding as well as (i) and (ii).     
\end{proof}

\section{Proof of \texorpdfstring{\thref{thm:Sm}}{Theorem 1}}\label{sec:pf-Sm}




First, we describe an efficient way to inductively compute the distribution of $U$ as $d$ is increased.

\begin{lemma}\thlabel{lem:inductive} 
   Set $\Phi_d = \exp\bigl(-\frac{1 - p_d^*}{d}\bigr)$ and $\Lambda_d = \exp\bigl(-\frac{(1-\hat{p})}{d-1}\lambda \bigr)$. Let $s_{1,0}(x,y) = 1$. For $d\geq 1$ and $0 \leq u \leq d-1$, define the sequence of functions $s_{d,u}(x,y)$ recursively by
\begin{align}
    s_{d,u}(x,y) & = \textstyle \binom{d-1}{u} \bigl(x y^{u+1}\bigr)^{d-1-u} s_{u+1,u}(x,y)  \text{\quad  for }0\leq u\leq d-2\text{, } \label{eq:rec1}  \\
    s_{d,d-1}(x,y) & = 1 - \textstyle \sum_{i = 0}^{d-2} s_{d,i}(x,y). \label{eq:rec2}
\end{align}
It holds that 
\begin{equation}
s_{d,u}(\Phi_d,\Lambda_d) = \P (U(d,p,\lambda) = u). \label{eq:s}
\end{equation}
\end{lemma}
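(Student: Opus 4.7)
The plan is to interpret the recursion combinatorially via Poisson thinning, and then prove the claim by induction on $d$ in a free two-parameter version that specializes to \eqref{eq:s}.

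First I would extract the binary data driving the dynamics. The number of particles sent from $\varnothing'$ to any given leaf $v_j$ is $\Poi((1-p_d^*)/d)$, with independence across $j$; likewise, once a leaf $v_i$ ($i\geq 1$) is activated, the number of its particles that reach any $v_j\neq v_i$ is $\Poi(\lambda(1-\hat p)/(d-1))$, with independence across targets and across sources. Letting $X_{s,t}$ be the indicator that at least one particle is sent from $s$ to $t$, these are mutually independent Bernoullis with $\P(X_{\varnothing',t}=0)=\Phi_d$ and $\P(X_{v_i,t}=0)=\Lambda_d$. Writing $A^\ast$ for the set of $v_j\in\{v_2,\ldots,v_d\}$ reachable from $\{\varnothing',v_1\}$ along the directed edges $\{(s,t):X_{s,t}=1\}$, the dynamics give $U=|A^\ast|$.

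Next I would decouple from the specific rates: let $\tilde s_{d,u}(x,y)$ be the probability that $|A^\ast|=u$ when the $X_{s,t}$'s are independent Bernoullis with $\P(X_{\varnothing',t}=0)=x$ and $\P(X_{v_i,t}=0)=y$. It suffices to prove $\tilde s_{d,u}(x,y)=s_{d,u}(x,y)$, because specializing $(x,y)=(\Phi_d,\Lambda_d)$ then yields \eqref{eq:s}. The proof is by induction on $d$. The base case $d=1$ holds since $A^\ast=\emptyset$. For the inductive step with $u\leq d-2$, fix any subset $A_0\subseteq\{v_2,\ldots,v_d\}$ of size $u$. By symmetry of the joint law of the $X$'s under permutations of $v_2,\ldots,v_d$, every such $A_0$ contributes equally, so
\[
\tilde s_{d,u}(x,y)=\binom{d-1}{u}\,\P(A^\ast=A_0).
\]
The event $\{A^\ast=A_0\}$ is the intersection of the ``inside'' event that every vertex of $A_0$ is reachable from $\{\varnothing',v_1\}$ using only edges $(s,t)$ with $s,t\in\{\varnothing',v_1\}\cup A_0$, and the ``outside'' event that $X_{s,t}=0$ for every $s\in\{\varnothing',v_1\}\cup A_0$ and every $t\in\{v_2,\ldots,v_d\}\setminus A_0$. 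The outside event involves $(u+1)(d-1-u)$ independent $X_{s,t}$'s and has probability $(xy^{u+1})^{d-1-u}$. The inside event involves a disjoint family of $X$'s, and is precisely the event $\{|A^\ast|=u\}$ for a $(u+1)$-leaf system with the same $x,y$; by induction its probability equals $s_{u+1,u}(x,y)$. Independence of the two families now yields \eqref{eq:rec1}, while \eqref{eq:rec2} is forced by $\sum_{u=0}^{d-1}s_{d,u}(x,y)=1$.

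The main subtlety is verifying that the factorization of $\{A^\ast=A_0\}$ is exact: on the outside event no activation chain can escape $\{\varnothing',v_1\}\cup A_0$, so reachability in the full system coincides with that in the $(u+1)$-leaf subsystem, and the inside and outside events truly depend on disjoint collections of $X_{s,t}$'s. The Poisson-thinning reduction is routine but is what turns the recursion into a clean combinatorial identity.
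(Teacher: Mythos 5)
Your proof is correct and follows essentially the same route as the paper: condition on the identity of the activated set (giving the $\binom{d-1}{u}$ factor), split into a self-activation event inside that set and a no-visit event outside, and use Poisson thinning to get independence and the factor $(xy^{u+1})^{d-1-u}$. Your explicit introduction of the free two-parameter function $\tilde s_{d,u}(x,y)$ is a slight refinement that makes the inductive appeal to $s_{u+1,u}$ evaluated at $(\Phi_d,\Lambda_d)$ (rather than at $(\Phi_{u+1},\Lambda_{u+1})$) fully airtight, where the paper handles this point more informally.
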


\begin{proof}
We will prove \eqref{eq:s} inductively in $d$ with the following hypothesis:
\begin{equation}
s_{d,u}(\Phi_d,\Lambda_d) = \P(U (d,p,\lambda) =u ) \qquad \forall\, 0 \leq u \leq d-1 \label{eq:ih}.
\end{equation}
The base case $d=1$ is trivial since $s_{1,0}\equiv 1$. Now suppose that \eqref{eq:ih} holds for $s_{d-1,u}$ for all $0\leq u \leq d-2$.
To prove equality for $s_{d,u}$ for $0\leq u <d-1$, we define two events: 
    \begin{itemize}
        \item $\SA$ is the event that all of $\varnothing',v_1,v_2,\ldots, v_{u+1}$ are visited using only particles started at $\varnothing',v_1,v_2,\ldots, v_{u+1}$,
        \item $\NV$ is the event that particles from $\varnothing',v_1,v_2,\ldots,v_{u+1}$ do not visit any of the vertices $v_{u+2}, \ldots, v_d$.
    \end{itemize}
    Notice that for $0 \leq u\leq d-2$ 
    \[
    \P(U(d,p,\lambda) = u) = \binom{d-1}{u}\Pr(\NV\mid \SA)\Pr(\SA).
    \]
    The choice of $v_2,\ldots,v_{u+1}$ can be replaced by any $u$ vertices, leading to the $\binom{d-1}{u}$ factor at the front. The event $\SA$ can be viewed as all of the leaves being visited on the star with $u+1$ leaves but with transition probabilities corresponding to $\Phi_d$ and $\Lambda_d$. By our inductive hypothesis we then have $\Pr(\SA) = s_{u+1,u}(\Phi_d,\Lambda_d)$. 
    
    It remains to describe $\Pr(\NV \mid \SA)$. Conditional on the occurrence of \SA, there are $d-1-u$ \emph{outside} vertices that must not be visited. Using Poisson thinning, $\varnothing'$ sends an independent $\Poi\bigl(\frac{1-p_d^*}{d}\bigr)$ to each outside vertex. The probability all of these quantities are zero is $\Phi_d^{d-1-u}$. Again by Poisson thinning, each of the $u+1$ activated vertices ($v_1$ starts out activated) sends an independent $\Poi\bigl(\frac{1-\hat p}{d-1}\lambda\bigr)$ number of particles to each outside vertex. The probability this is zero is $\Lambda_d^{d-1-u}$. As there are $u+1$ activated vertices, all of these are zero with probability $\Lambda^{(d-1-u)(u+1)}_d$. Multiplying this with $\Phi_d^{d-1-u}$ gives the probability of $\NV$ given $\SA$. Thus, we have for all $0\leq u \leq d-2$ that 
    \[
        \P(U(d,p,\lambda) = u) = \binom{d-1}{u}\bigl(\Phi_d \Lambda_d^{u+1}\bigr)^{d-1-u} s_{u+1,u}(\Phi_d,\Lambda_d),
    \]
    which matches the $s_{d,u}(\Phi_d,\Lambda_d)$ in our recursive definition.
    
    Lastly, equality for $u=d-1$ is the observation that $\P(U(d,p,\lambda) = d-1)$ is complementary to $\P(U(d,p,\lambda) < d-1)$, which have the claimed formula by our previous reasoning. 
\end{proof}

\begin{proof}[Proof of \texorpdfstring{\thref{thm:Sm}}{}]
    Let $f(\lambda)=f^{d,p}(\lambda) = \E[e^{\lambda -p_d^* - \hat p (1+U) \lambda}]$. Suppose we prove that
    \[M^{d,p} \coloneqq \sup_{\lambda \geq 0} f^{d,p}(\lambda) <1.\] Then, \eqref{eq:suff} implies that $\SFM(d,p,\Poi(1))$ is recurrent. The stochastic comparison result in \cite{johnson2018stochastic} implies that $\SFM(d,p,1)$ is recurrent. The monotonicity result in \thref{thm:m1} further ensures that $\SFM(d',p,1)$ is recurrent for all $d'\geq d$. We then apply the observation that $V_{\SFM(d,p,1)} \preceq V_{\FM(d,p,1)}$ in \cite{guo2022minimal} to conclude that $\FM(d,p,1)$ is recurrent and thus $S_d\leq p$.
    
\thref{lem:inductive} allows us to efficiently compute the distribution of $U$ as we increase $d$. This formula only involves powers of exponential functions of $\lambda$. The method devised in \cite{bailey2023critical} gives a way to rigorously prove that $f$ has a single maximum on $[0,\infty)$. To accomplish this we make the change of variables $f(-c\log(y)) = g(y)$. If $c$ is an appropriately chosen rational number, then $g$ is a polynomial. The \texttt{CountRoots} function in Mathematica helps us to confirm with Sturm's theorem that $g'$ has a single root in $(0,1]$. So, $g$ (and thus $f$) has a unique global maximum which we prove is strictly less than $1$. We implemented this approach for $2 \leq d \leq 13$. For larger $d$ we experience runtime issues with this computer-assisted rigorous proof. More details can be found in the appendix and the code documentation at \href{https://github.com/fredcheng02/frog-model}{https://github.com/fredcheng02/frog-model}.
\end{proof}

\section{Proof of \texorpdfstring{\thref{thm:m2}}{Theorem 3}}

We first prove that $U$ is monotone in $d$. A similar, but less general observation was made in \cite{bailey2023critical}. 

\begin{lemma}\thlabel{lem:Um}
$U(d,p,\lambda) \preceq U(d+1,p,\lambda)$.
\end{lemma}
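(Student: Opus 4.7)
The plan is to adapt the step-by-step coupling from the proof of \thref{thm:m1} to the star graph underlying the definition of $U(d,p,\lambda)$. Specifically, I would construct a coupling of the $U(d,p,\lambda)$ process with a subprocess of the $U(d+1,p,\lambda)$ process (where certain particles in the $(d+1)$-model are killed mid-process) so that the number $U'$ of leaves visited by the subprocess equals $U(d,p,\lambda)$ almost surely. Since killing particles only removes visits, $U' \leq U(d+1,p,\lambda)$ almost surely, and the two together give the desired stochastic dominance.

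First, I would couple the initial configurations by matching the $\Poi(1)$ particle count at $\varnothing'$, the $\Poi(\lambda)$ count at $v_1$, and the $\Poi(\lambda)$ dormant counts at each of $v_2,\ldots,v_d$ across the two models, plus taking an independent $\Poi(\lambda)$ dormant at $v_{d+1}$ for the larger model. Then I would maintain a bijection $\psi^s$ from the set $V_d^{(s)}$ of visited leaves in the $d$-process after $s$ elementary steps to its coupled subset $V_{d+1}^{\prime,(s)}$ of visited leaves in the $(d+1)$-subprocess, with $\psi^s(v_1)=v_1$ fixed throughout.

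Next, I would process one active particle at a time, as in the proof of \thref{thm:m1}. The paired particles move together: whenever the $d$-particle activates a previously unvisited leaf $v$, the coupled $(d+1)$-particle (in its subprocess) also activates an unvisited leaf $v'$, and $\psi^{s+1}$ is extended by $v \mapsto v'$; whenever the $d$-particle fails to activate anything new, the coupled $(d+1)$-particle is killed in the subprocess. Feasibility relies on the two inequalities
\begin{align*}
(1-p_d^*)\,\frac{d-k}{d} \leq (1-p_{d+1}^*)\,\frac{d+1-k}{d+1} \quad\text{and}\quad (1-\hat p)\,\frac{d-k}{d-1} \leq (1-\hat p)\,\frac{d+1-k}{d},
\end{align*}
valid whenever $k \coloneqq |V_d^{(s)}| \geq 1$, a condition automatic since $v_1 \in V_d^{(s)}$ throughout. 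The first inequality is precisely the one established in the proof of \thref{thm:m1}; the second is an elementary rearrangement equivalent to $k \geq 1$. Because particles in the star take at most two moves before freezing, the scheme terminates after finitely many steps with $V_{d+1}^{\prime}=\psi(V_d)$, giving $U(d,p,\lambda) = |V_d \setminus \{v_1\}| = U' \leq U(d+1,p,\lambda)$ almost surely.

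The main obstacle is specifying the joint law at each coupling step so that the $(d+1)$-subprocess particles marginally follow the $(d+1)$ random-walk law while the ``extra'' activation probability available in the larger model (the slack in the two inequalities above) is absorbed by killings aligned with the $d$-process outcomes. This requires care to ensure that, at each step, the conditional destination distribution of the $(d+1)$-particle given the $d$-particle's outcome produces the correct uniform marginal on the $d+1$ leaves---essentially the same bookkeeping as in \thref{thm:m1}, with $v_{d+1}$ playing the role of the extra child vertex in that proof.
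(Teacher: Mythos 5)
Your proposal is correct: the two inequalities you isolate are exactly the ones needed (the first is the one from the proof of \thref{thm:m1} with $n=d-k$, and the second reduces to $k\geq 1$, which holds because $v_1$ is activated from the start), and the sequential kill-and-match coupling does terminate since every particle in the star freezes after at most two moves. The paper reaches the same conclusion by a differently organized coupling: it recasts the star process as a coupon-collector problem, using Poisson thinning to say that $\varnothing'$ contributes $\Poi\bigl((1-p_d^*)\tfrac{d-1}{d}\bigr)$ versus $\Poi\bigl((1-p_{d+1}^*)\tfrac{d}{d+1}\bigr)$ uniform draws (coupled via your first inequality) and each activated leaf contributes an identically distributed $\Poi((1-\hat p)\lambda)$ number of draws in both models; a standalone claim that a collector drawing from $n+1$ coupons can be coupled to dominate one drawing from $n$ coupons then does the rest. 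The batch/coupon-collector formulation lets the paper equate the \emph{number} of draws per activated site and push all the set-size comparison into one elementary claim, so your second inequality never appears explicitly; your version instead makes the argument uniform with the proof of \thref{thm:m1}, at the cost of more per-particle bookkeeping (in particular, you still owe the explicit three-category coupling of destinations $\{\varnothing,\ \text{new leaf},\ \text{old leaf}\}$ that you flag as the ``main obstacle,'' though it is routine given that both the root probability and the new-leaf probability are larger in the $(d+1)$-model). Either route yields the lemma.
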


\begin{proof}
We start with a general observation about coupon collectors collecting from different sized sets of coupons. Namely, whoever has a larger set of coupons collects at least as many coupons as the collector with less options. To be more precise, suppose that two coupon collectors are independently collecting coupons. The first collector collects from a set of $n$ coupons and the second collector collects from a set of $n+1$ coupons. Let $C_t^i$ be the number of distinct coupons collected by collector $i=1,2$ after each takes $t$ uniform draws from their set of coupons.

\begin{claim}\thlabel{claim:coupon} There is a coupling so that $C_t^1 \leq C_t^2$ for all $t \geq 0$. 
\end{claim}

\begin{proof}
The inequality holds at $t=0$. Suppose it is true after collecting $t$ coupons. If $C_t^1 < C_t^2$, then regardless of the $(t+1)$th draw we will have $C_{t+1}^1 \leq C_{t+1}^2$. Suppose that $C_t^1 = C_t^2= c \leq n$. For the $(t+1)$th draw, the first collector has probability $(n-c)/n$
of collecting a new coupon. The second collector has probability $(n+1-c)/(n+1).$
It is easy to check that the second probability is at least as large as the first. Thus, we may couple the two processes so that the second collector discovers a new coupon whenever the first collector does. This gives $C_{t+1}^1 \leq C_{t+1}^2$. 
\end{proof}

Now we will describe $U(d,p,\lambda)$ and $U(d+1,p,\lambda)$ in terms of coupon collector processes. Collector A is collecting coupons uniformly from coupons $\{1,\hdots, d\}$. Collector B is collecting coupons uniformly from coupons $\{1,\hdots, d+1\}$. Suppose further that each collector has initially collected coupon 1. 

At the first step Collector A samples $\Poi\bigl((1-p_d^*)\f{d-1}{d}\bigr)$  coupons uniformly from $\{2,\hdots, d\}$, and Collector B samples $\Poi\bigl((1-p_{d+1}^*)\f{d}{d+1}\bigr)$ coupons uniformly from $\{2,\hdots, d+1\}$. A specific instance of the inequality in the last paragraph of Section~\ref{sec:m1} is that for $p<1/2$ and $d \geq 2$, 
$$(1- p_d^*) \f{d-1}{d} < (1-p_{d+1}^*) \f{d}{d+1}.$$
Thus, we may couple Collector B to collect at least as many total coupons from $\{2,\hdots, d+1\}$ as Collector A does from $\{2,\hdots, d\}$. By \thref{claim:coupon} there is a coupling so that Collector B ends up with at least as many distinct coupons as Collector A. 

For each distinct coupon collected, say coupon $i$, the collectors receive an independent $\Poi((1-\hat p) \lambda)$ distributed number of additional uniform draws from the set of coupons minus coupon $i$. We may couple the number of uniform draws from each newly discovered coupon and repeatedly apply \thref{claim:coupon} to ensure that Collector B always has at least as many distinct coupons as Collector A. In particular, this is true once no new coupons are discovered and the collecting ends. Comparing to the definition of $U$, the number of distinct coupons collected by Collectors A and B are distributed as $U(d,p,\lambda)$ and $U(d+1,p,\lambda)$, respectively. This gives the claimed stochastic dominance. 
\end{proof}

Recall that $f^{d,p}(\lambda) = \E[e^{\lambda -p_d^* - \hat p (1+U) \lambda}]$ and $M^{d,p} = \sup_{\lambda \geq 0} f^{d,p}(\lambda)$.
\begin{lemma}\thlabel{lem:M} For $p \in (\frac{1}{d+1},\frac{1}{2})$
    \begin{enumerate}[label = (\roman*)]
        \item $M^{d,p}$ is continuous in $p$.
        \item $M^{d+1,p}<M^{d,p}$.
    \end{enumerate}
\end{lemma}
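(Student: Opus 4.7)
For (ii), I would apply \thref{lem:Um} to couple $U(d,p,\lambda) \preceq U(d+1,p,\lambda)$ and observe that a direct computation from \eqref{eq:p*} gives $p_d^* < p_{d+1}^*$ for $p \in (0, 1/2)$. Since $u \mapsto e^{-\hat p(1+u)\lambda}$ is non-increasing for $\lambda \geq 0$, the coupling yields
\[\E[e^{-\hat p(1+U(d+1,p,\lambda))\lambda}] \leq \E[e^{-\hat p(1+U(d,p,\lambda))\lambda}],\]
so factoring out $e^{p_d^* - p_{d+1}^*}$ gives $f^{d+1,p}(\lambda) \leq e^{p_d^* - p_{d+1}^*} f^{d,p}(\lambda)$ for every $\lambda \geq 0$. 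Taking suprema and using $M^{d,p} \geq f^{d,p}(0) = e^{-p_d^*} > 0$ produces the strict inequality $M^{d+1,p} < M^{d,p}$.

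For (i), I would show that $f^{d,p}(\lambda)$ extends to a jointly continuous function on $[0,\infty] \times (1/(d+1), 1/2)$, where $[0,\infty]$ is the one-point compactification of $[0, \infty)$. Continuity of $M^{d,p} = \sup_\lambda f^{d,p}(\lambda)$ then follows from the standard fact that the supremum over a compact first factor of a jointly continuous function is continuous in the parameter. Applying \thref{lem:inductive},
\[f^{d,p}(\lambda) = e^{-p_d^*} \sum_{u=0}^{d-1} s_{d,u}(\Phi_d, \Lambda_d)\, e^{\lambda(1-\hat p(1+u))}, \quad \Lambda_d = e^{-(1-\hat p)\lambda/(d-1)},\]
and expanding each $s_{d,u}(\Phi_d, \Lambda_d)$ as a polynomial in $\Lambda_d$ rewrites $f^{d,p}(\lambda)$ as a finite sum of terms $c_{u,j}(p)\,e^{\lambda \gamma_{u,j}(p)}$ with $\gamma_{u,j}(p) = 1 - \hat p(1+u) - j(1-\hat p)/(d-1)$. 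The main technical claim is that $\gamma_{u,j}(p) \leq 0$ for every $p \in (1/(d+1), 1/2)$ and every $(u, j)$ appearing with nonzero coefficient, with strict inequality whenever $(u,j) \neq (0, d-1)$.

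I would verify this claim by combining two observations. First, \eqref{eq:rec1} implies that $s_{d,u}(x,y)$ has minimum $y$-degree $(u+1)(d-1-u)$ for $u \leq d-2$ (and $0$ for $u = d-1$, from the constant term in \eqref{eq:rec2}), so every $(u,j)$ in the support of $s_{d,u}$ satisfies $j \geq d - 1 - u$. Second, a one-line calculation shows that $(d-1)(1 - \hat p(1+u))/(1 - \hat p) < d-1-u$ is equivalent to $\hat p > 1/d$, i.e., to $p > 1/(d+1)$, for $u \geq 1$; for $u = 0$ the quantity equals $d-1$ identically. Granted the claim, the exceptional pair $(0, d-1)$ contributes the constant $e^{-p_d^*}\Phi_d^{d-1}$ to $f^{d,p}(\lambda)$ (since $\Phi_d^{d-1}\Lambda_d^{d-1}e^{\lambda(1-\hat p)}=\Phi_d^{d-1}$), while every other term decays to zero uniformly on compact subsets of $(1/(d+1), 1/2)$ as $\lambda \to \infty$, yielding joint continuity on the compactification and thus (i). The main obstacle is the polynomial bookkeeping required to make this uniform-decay estimate precise, but no deep new idea is needed.
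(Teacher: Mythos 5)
Your proposal is correct, and for part (i) it is essentially the paper's argument in different clothing: your one-point compactification of $[0,\infty)$ is the paper's change of variables $y=e^{-\lambda}$ mapping $[0,\infty]$ to $[0,1]$, your sign analysis of the exponents $\gamma_{u,j}(p)$ is exactly the paper's verification that the $y$-exponents of $g(y)=f(-\log y)$ are nonnegative and vanish only for the $u=0$ term (which contributes the same constant $e^{-p_d^*}\Phi_d^{d-1}$ you identify), and ``sup over a compact fiber of a jointly continuous function'' is the paper's uniform-continuity-on-$[r_1,r_2]\times[0,1]$ argument.

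For part (ii) your route is genuinely, if mildly, different and in fact cleaner. The paper deduces the strict inequality $M^{d+1,p}<M^{d,p}$ from the strict pointwise inequality $f^{d+1,p}(\lambda)<f^{d,p}(\lambda)$, which requires first proving that the supremum is \emph{attained} on $(0,1]$; this forces the extra step of computing $g(0)$ and $g(1)$ and checking $g(1)>g(0)$ so that the maximum is not at the excluded boundary point $y=0$. Your observation that the comparison is in fact \emph{multiplicative}, $f^{d+1,p}(\lambda)\leq e^{p_d^*-p_{d+1}^*}f^{d,p}(\lambda)$ with a $\lambda$-independent factor strictly less than $1$, lets the strictness pass through the supremum directly, with no attainment needed. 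The one thing you should say explicitly is that $M^{d,p}<\infty$ (otherwise $M^{d+1,p}\leq cM^{d,p}$ with $c<1$ gives nothing); this is immediate from your part (i), since a continuous function on the compact set $[0,\infty]$ is bounded, but it is a hypothesis your final step silently uses, and it is exactly the role the $g(1)>g(0)$ computation plays in the paper. With that noted, both arguments rest on the same two inputs --- \thref{lem:Um} and $p_d^*<p_{d+1}^*$ --- and your version shortens the proof of (ii).
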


\begin{proof}[Proof of (i)]
For now fix $p$ and $d$. Let $g(y) = f(-\log(y))$. Recall \[\P(U(d,p,\lambda) = u) = s_{d,u}(\Phi_d,\Lambda_d)\] from \thref{lem:inductive}. Making this replacement, we have that $f\colon [0,\infty) \to \R$ given by \[f(\lambda) = e^{-p_{d}^{*}} \sum_{u=0}^{d-1}e^{(1 -\hat{p}(1+u) )\lambda}  s_{d,u}(\Phi_d,\Lambda_d)\] becomes $g\colon(0,1] \to \R$ with \[
    g(y) = e^{-p_{d}^{*}}\sum_{u =0}^{d-1}y^{\hat{p}(1+u) - 1}s_{d,u}\bigl(\Phi_d,y^{\frac{1-\hat{p}}{d-1}}\bigr)
\]
using $\Lambda_d = \exp\bigl(-\frac{(1-\hat{p})}{d-1}\lambda \bigr)$. Now $M^{d,p} = \sup_{y \in (0,1]}g^{d,p}(y)$. 

It is easy to show via induction that the $y$ terms in $s_{d,u}(\Phi_d,y^{\frac{1-\hat{p}}{d-1}})$ have nonnegative exponents. So, when $0 \leq u \leq d-2$, the exponent of $y$ in the summand is 
\[
(1-\hat{p})\frac{(d-1-u)(u+1)}{(d-1)}+\bigl[\text{exponent of }y\text{ in }s_{u+1,u}\bigl(\Phi_d,y^{\frac{1-\hat{p}}{d-1}}\bigr)\bigr] + [\hat{p}(1+u)-1].
\]
The above expression is nonnegative since all of its terms are nonnegative, and equals zero if and only if $u = 0$. 
The summand when $u=0$ becomes $\Phi^{d-1}_d=\exp\bigl(-\frac{d-1}{d}(1-p_d^*)\bigr)$. If $u=d-1$, then our assumption that $p > \f{1}{d+1}$ ensures that $\hat{p} > 1/d$. So the $y$-exponents in the $u = d-1$ summand are strictly positive.



$g(y)$ is now a sum of terms, each with nonnegative $y$-exponents. Under the customary definition $0^0 = 1$, we see $g(y)$ is continuous on the interval $[0,1]$.

Now define the function $G\colon (\frac{1}{d+1},\frac{1}{2})\times[0,1] \to \mathbb R$ such that $G(p,y)=g^{d,p}(y)$ for every fixed $d$. It is clear that $G$ is continuous on its domain (since $\frac{\partial G}{\partial p}$ and $\frac{\partial G}{\partial y}$ are both continuous in some open set containing the domain). Consider any $p_{0}\in(\frac{1}{d+1},\frac{1}{2})$. Take any $r_1 < p_0$ and $r_2 > p_0$ so that $p_{0}\in[r_{1},r_{2}]\subseteq(\frac{1}{d+1},\frac{1}{2})$, then $G$ is uniformly continuous on the compact set $[r_{1},r_{2}]\times[0,1]$. In particular we have for all $\gamma>0$, there exists some $0<\delta\leq\min\{p_0-r_1,r_2-p_0\}$ such that for any $y\in(0,1]$, if $\abs{p-p_{0}}<\delta$, then $\abs{G(p,y)-G(p_{0},y)}<\gamma$.

To show $M^{d,p}=\sup_{y\in(0,1]}G(p,y)$ is continuous in $p$, note
\[
\Bigl\lvert\sup_{y\in(0,1]}G(p,y)-\sup_{y\in(0,1]}G(p_{0},y)\Bigr\rvert \leq\sup_{y\in(0,1]}\abs{G(p,y)-G(p_{0},y)}.
\]
By the uniform continuity argument above we know immediately that $M^{d,p}$ is continuous at all $p\in(\frac{1}{d+1},\frac{1}{2})$.
\end{proof}

\begin{proof}[Proof of (ii)]
In (i) we said $g(y)$ is continuous on the compact interval $[0,1]$, which implies $g(y)$ attains its maximum on $[0,1]$. Note that 
\[
g(0)=
\exp\Bigl(-p_{d}^{*}-\frac{d-1}{d}(1-p_d^*)\Bigr)=\exp\Bigl(-\frac{1}{d}p_{d}^{*}-\frac{d-1}{d}\Bigr)
\]
and $g(1) = e^{-p_d^*}$. By $p < 1/2$ we have $p_d^* < 1$, which implies $-\frac{1}{d}p_{d}^{*}-\frac{d-1}{d}<-p_{d}^{*}$. It follows that $g(1) > g(0)$, and hence $g(y)$ attains its maximum on $(0,1]$, i.e., \[M^{d,p} = \max_{y \in  (0,1]} g^{d,p}(y).\]

We can write $f^{d,p}(\lambda)= e^{-p_{d}^{*} + \lambda}\E[e^{-\hat{p}(1+U) \lambda}].$ By \thref{lem:Um} we have $U(d,p,\lambda)\preceq U(d+1,p,\lambda)$. 
Since $p_{d+1}^{*}>p_{d}^{*}$, it follows that  $f^{d+1,p}(\lambda) < f^{d,p}(\lambda).$
%
Therefore $g^{d+1,p}(y)<g^{d,p}(y)$ for all $y\in(0,1]$.
Since $g$ attains its maximum on $(0,1]$, $M^{d+1,p}<M^{d,p}$, as desired.
\end{proof}





It is now quick to deduce our final result.

\begin{proof}[Proof of \thref{thm:m2}]
    Referring to the statements in \thref{lem:M}. (i) implies that $M^{d,q_d} \leq 1$. (ii) implies that $M^{d+1,q_d} <1$. Another application of (i) implies that $M^{d+1,q_d-\epsilon} <1$ for some $\epsilon >0$. Hence $q_{d+1} \leq q_d -\epsilon$.
\end{proof}

\section{Appendix}
We explain how the exact bounds in \thref{thm:Sm} and the approximate bounds in Figure~\ref{fig:up-bounds} are obtained. The code and further explanation can be found in the Jupyter Notebooks at \href{https://github.com/fredcheng02/frog-model}{https://github.com/fredcheng02/frog-model}. The code is written in SageMath, but we imported the \texttt{CountRoots} function from Mathematica at some moment, as previously noted.

We mentioned in the proof of \thref{thm:Sm} that we make a change of variable $f(-c\log(y)) = g(y)$. The approach in \cite{bailey2023critical} was to visually inspect the formula to determine which $c$ should be chosen so that $g(y)$ gives us a polynomial (integer exponents). This is possible for $d \leq 4$, but problematic for larger $d$ when the expressions involve an exponentially increasing number of terms. Our approach is to take $p$ to be an irreducible fraction $\frac{a}{b} \in (\frac{1}{d+1},\frac{1}{2})$ and set $c = (b-a)(d-1)$. Then \[
    g(y) = \exp\Bigl(\frac{a-ad}{bd-ad-d}\Bigr)\sum_{u=0}^{d-1}y^{(d-1)[(u+2)a-b]}s_{d,u}(\Phi_d,y^{b-2a}),
\]
where $\Phi_d = \exp\bigl(\frac{2a-b}{bd-ad-a}\bigr)$. Using the same argument as in \thref{lem:M} (i), one can show that this change of variables always makes $g(y)$ a polynomial. We then have the power of exact algorithms to show that $g$ has a unique maximum on $(0,1]$, which we prove is strictly less than $1$. The bounds obtained in \thref{thm:Sm} are nearly the best possible using this method. We use continued fraction approximation to find the $p$'s such that $0.9994 < \sup_{y \in (0,1]}g^{d,p}(y) < 1$ for all $2\leq d \leq 13$.

In finding the maximum of $g$ on $(0,1]$ above, we need to use the \texttt{find\_root} function in SageMath to obtain the root of $g'$. The Brent's method used in \texttt{find\_root} only allows fixed machine precision, and fails to give a correct root when $d \geq 14$, given the fast-growing complexity of the exact symbolic expression for $g(y)$. This led us to numerically approximate the bounds for higher $d$'s instead.

To obtain the values in Figure \ref{fig:up-bounds}, we work with the original $f(\lambda)$. It appears that $\argmax_{\lambda \geq 0} f^{d,p}(\lambda)$ converges monotonically down to some value in $[0,1)$ as $d\to \infty$, and $f$ has small curvature around its peak. For a chosen $p$, if we can verify that $f^{d,p}(0),f^{d,p}(0.01),\ldots,f^{d,p}(0.99)$ are all 
 strictly less than $1$, then we check for a slightly smaller $p$ (e.g., we decrease $p$ by 0.0001 in our code) if they are still less than 1. If not, then we have just found the approximate $p$ such that $\sup_{\lambda \in [0,\infty)}f^{d,p}(\lambda) \approx 1$.

Our code also lets us verify if a specific $p$ can work as an upper bound to $S_m$. For large $m$ we have to adjust the real number precision accordingly to avoid numerical errors. For example, we have checked that the upper bounds fall below $0.18$ for the first time at around $m = 230$, which again supports our conjecture that $p_d$ converges at rate $d^{-1/2}$. 

\bibliographystyle{alpha}

\bibliography{frog}

\end{document}